\newtheorem{thm}{Theorem}[section] 
\newtheorem{prop}[thm]{Proposition} 
\newtheoremstyle{named}{}{}{\itshape}{}{\bfseries}{.}{.5em}{#3}
\theoremstyle{named} 
\theoremstyle{remark}
\newtheorem{rem}[thm]{Remark} 
\theoremstyle{definition}
\newenvironment{sistema}%
{\left\lbrace\begin{array}{@{}l@{}}}%
{\end{array}\right.}
\newcommand{\ZZ}{\ensuremath{\mathbb{Z}}}
\newcommand{\QQ}{\ensuremath{\mathbb{Q}}}
\newcommand{\RR}{\ensuremath{\mathbb{R}}}
\newcommand{\CC}{\ensuremath{\mathbb{C}}}
\newcommand{\FF}{\ensuremath{\mathbb{F}}}
\newcommand{\PP}{\ensuremath{\mathbb{P}}}
\newcommand{\HH}{\ensuremath{\mathcal{H}}}
\newcommand{\cfs}{\ensuremath{\mathcal{S}}}
\newcommand{\SL}{\ensuremath{\mathrm{SL}}}
\newcommand{\GL}{\ensuremath{\mathrm{GL}}}
\newcommand{\Gammait}{\ensuremath{\mathit{\Gamma}}}
\renewcommand{\Im}{\mathop{\mathrm{Im}}}
\newcommand{\Xns}{X_{\textnormal{ns}}}
\newcommand{\Xs}{X_{\textnormal s}}
\newcommand{\Cs}{C_{\textnormal s}}
\newcommand{\Gammaits}{\Gammait_{\textnormal s}}
\newcommand{\eqdef}{\displaystyle\mathop{=}^\textnormal{def}}
\begin{document}

\title{Double Covers of Cartan Modular Curves}
\author{Valerio Dose}
\email{dose@mat.uniroma2.it}

\thanks{Il primo autore ha svolto questo lavoro come titolare di un Assegno ``Ing. Giorgio Schirillo'' dell'Istituto Nazionale di Alta Matematica}

\author{Pietro Mercuri}
\email{mercuri.ptr@gmail.com}

\address{Dipartimento di Matematica\\Universit\`a di Roma ``Tor Vergata''\\
Via della Ricerca Scientifica 1\\00133 Roma\\ITALY}

\author{Claudio Stirpe}
\email{clast@inwind.it}

\subjclass[2010]{14G35,11G18,14Q05}
\keywords{modular curves, double cover, elliptic curves, galois representations}

\begin{abstract}
We present a strategy to obtain explicit equations for the modular double covers associated respectively to both a split and a non-split Cartan subgroup of $\text{GL}_2(\mathbb F_{p})$ with $p$ prime. Then we apply it successfully to the level $13$ case.
\end{abstract}

\maketitle

\section{Introduction}

Non-cuspidal rational points on modular curves parametrize elliptic curves over $\mathbb Q$ with particular properties regarding the associated Galois representation modulo some positive integer~$N$. Having equations for such modular curves helps to explicitly determine elliptic curves with a given Galois representation modulo $N$. When $N=p$ is a prime number, modular curves associated to maximal subgroups of $\text{GL}_2(\mathbb F_p)$ with surjective determinant, such as normalizers of Cartan subgroups, play an important role in the framework of Serre's uniformity problem, which concerns the determination of elliptic curves with surjective Galois representation modulo~$p$.

Recent work has been done in determining equations of modular curves of Cartan type. An  affine plane model for $X_0(169)\cong \Xs(13)$ is computed in \cite{Kenku169}\cite{Kenku169errata}. An equation for $\Xns^+(13)$ and $\Xs^+(13)$ is computed in \cite{Baran13}, while models of $\Xns^+(p)$ for $p=17, 19$ can be found in \cite{Merns}. An equation for $\Xns(11)$ and the double cover $\Xns(11)\rightarrow\Xns^+(11)$ is computed in \cite{DFGS}. Furthermore, it is used in \cite{Fermat23n} to solve the generalized Fermat equation with exponents $2,3,11$ and in \cite{ZywinaPossible} to classify the possible images of Galois representations modulo $11$ associated to elliptic curves. 

In this paper we present a strategy to compute birational models of modular curves $\Xs(p)$ and $\Xns(p)$ obtained as double covers of their explicitly given quotients $\Xs^+(p)$ and $\Xns^+(p)$, associated respectively to the normalizer of a split and a non-split Cartan subgroup of $\textnormal{GL}_2(\mathbb F_p)$. As an application, we compute singular models in $\mathbb A^3$ for the genus 8 curves $\Xs(13)$ and $\Xns(13)$. For completeness and further check we also compute smooth equations of the canonical model of such curves in $\mathbb P^7$ using classical methods and give explicit maps between all the known models of $\Xs(13)$ and $\Xns(13)$.

In particular, in Section \ref{sec:canmod} we recall the method to compute equations of the canonical model, in Section \ref{doublecovers} we describe the new strategy in general,  and in Section \ref{Results} we apply the two methods to the level 13 case, obtaining the equations. Furthermore, in Section \ref{maps} we also compute a birational map between the two models found.\newline

\noindent \textbf{Acknowledgments.} We would like to thank Prof. R. Schoof for an early review and useful comments on this paper. We would also like to thank Prof. M. Caboara for giving us access to computational resources at the University of Pisa, and M. Paganin for his bibliographic help.

\section{Notation and basic facts}

Let $\HH=\{\tau\in\CC, \Im(\tau)>0\}$ be the complex upper half-plane and let $\HH^*=\HH\cup\{\infty\}\cup\QQ$, both endowed with the action of $\SL_2(\mathbb R)$ given by fractional linear transformation. Given a congruence subgroup $\Gammait$ of $\text{SL}_2(\mathbb Z)$, we can consider the modular curve $X(\Gammait)$ associated to $\Gammait$ which is obtained by providing the orbit space $\Gammait\backslash\mathcal H^*$ with the structure of a compact Riemann surface. When $\Gammait=\SL_2(\ZZ)$ we denote the associated modular curve of genus 0 by $X(1)$. The complex points of $X(1)$ parametrize elliptic curves over $\mathbb C$ up to isomorphism. There is a morphism from every modular curve $X(\Gammait)$ to the modular curve $X(1)$ which is given by the group inclusion $\Gammait\subset\text{SL}_2(\mathbb Z)$. This morphism is called the $j$-map of $X(\Gammait)$.

Let $N$ be a positive integer and let $H$ be a subgroup of $\GL_2(\ZZ/N\ZZ)$. We can associate to $H$ the congruence subgroup $\Gammait_H\eqdef \{x \in \SL_2(\ZZ)\textnormal{ such that }x \pmod N \in H\}$ and the modular curve $X_H\eqdef X(\Gammait_H),$ which admits the structure of projective algebraic curve. When the determinant homomorphism $\textnormal{det}: H\rightarrow (\ZZ/N\ZZ)^{\times}$ is surjective, $X_H$ can be defined over $\mathbb Q$. Furthermore, if the matrix $\begin{pmatrix} -1 & 0\\0 & -1\end{pmatrix}$ belongs to $H$, then, for any number field $K$, the $K$-rational points on $X_H$ parametrize the elliptic curves defined over $K$ such that $H$ contains the image of the associated Galois representation modulo $N$, given by the action of the Galois group $\textnormal{Gal}(\overline{\mathbb Q} /K)$ on $N$-torsion points.

If we take $H$ to be a Borel subgroup of $\GL_2(\ZZ/N\ZZ)$, we obtain the classical modular curve $X_0(N)$. This curve has an automorphism $w_N$, the Atkin-Lehner involution, induced by the action of the matrix $\begin{pmatrix}0 & -\frac1{\sqrt N} \\ \sqrt N & 0\end{pmatrix}\in\SL_2(\RR)$ on $\mathcal H$. This automorphism allows us to define the quotient curve $X_0^+(N)\eqdef X_0(N)/\langle w_N\rangle$.

Let now $N=p$ be a prime number. If we take $H$ to be a split or non-split Cartan subgroup of $\GL_2(\ZZ/p\ZZ)$ (see \cite[p. 278, Section 2.1]{Ser}) we denote the associated modular curve by $\Xs(p)$ and $\Xns(p)$, respectively. Any Cartan subgroup of $\GL_2(\ZZ/p\ZZ)$ has index $2$ in its normalizer. Thus we obtain involutions $w_\textnormal{s}$ and $w_\textnormal{ns}$ and double covers $\Xs(p)\rightarrow \Xs^+(p)$ and $\Xns(p)\rightarrow\Xns^+(p)$, where $\Xs^+(p)$ and $\Xns^+(p)$ are the modular curves associated to the normalizer of a split and a non-split Cartan subgroup of $\GL_2(\ZZ/p\ZZ)$, respectively. Moreover we have $\Xs^+(p)=\Xs(p)/\langle w_\textnormal{s}\rangle$ and $\Xns^+(p)=\Xns(p)/\langle w_\textnormal{ns}\rangle$. Since the congruence subgroup of $\SL_2(\ZZ)$ associated to a split Cartan subgroup of $\GL_2(\ZZ/p\ZZ)$ is conjugate to the congruence subgroup associated to a Borel subgroup of $\GL_2(\ZZ/p^2\ZZ)$, we get the isomorphism $\Xs(p)\cong X_0(p^2)$. Furthermore, the involution $w_s$ of $\Xs(p)$ corresponds to the Atkin-Lehner involution of $X_0(p^2)$, so that also $\Xs^+(p)$ is isomorphic to $X_0^+(p^2)$.

\section{Explicit equations of modular curves using the canonical embedding} \label{sec:canmod}

In this section we briefly recall how to get explicit equations for the canonical model of modular curves when we know enough Fourier coefficients of a basis of the vector space of modular forms corresponding to the space of differentials of the curve (see \cite{Mer0} for more details).

 Let $\Gammait$ be a congruence subgroup of $\SL_2(\ZZ)$, let $\cfs_2(\Gammait)$ be the $\CC$-vector space of the cusp forms of weight $2$ with respect to $\Gammait$.  We know that $\cfs_2(\Gammait)$ is isomorphic to the $\CC$-vector space of holomorphic differentials $\Omega^1(X(\Gammait))$ via the map $f(\tau)\mapsto f(\tau)d\tau$ (see \cite[p. 81, Theorem 3.3.1]{DS}). Using this isomorphism when the genus $g$ of $X(\Gammait)$ is greater than~$2$, we get the following realization of the canonical map
\begin{align*}
\varphi\colon X(\Gammait) & \to \PP^{g-1}(\CC) \\
\Gammait\tau & \mapsto (f_1(\tau):\ldots:f_g(\tau)),
\end{align*}
where $\tau\in\HH^*$ and $\mathcal B=\{f_1,\ldots,f_g\}$ is a $\CC$-basis of $\cfs_2(\Gammait)$. The Enriques-Petri Theorem (see \cite[Chapter 4, Section 3, p. 535]{GH} or \cite{Sd73}), states that the canonical model of a complete non-singular non-hyperelliptic curve is entirely cut out by quadrics and cubics. 


Though the Enriques-Petri Theorem is proved only over algebraically closed fields, when $X(\Gammait)$ can be defined over $\QQ$, we can try to look for quadratic and cubic equations over $\QQ$ for the image of $\varphi$. Then we can check if the zero locus $Z$ of such equations, which contains by construction the image of $\varphi$, is an algebraic curve with the same genus as $X(\Gammait)$. If this is the case, an application of the Riemann-Hurwitz formula tells us that the morphism $\varphi\colon X(\Gammait)\to Z$ is an isomorphism.

We know that $X_0(N)$ is not hyperelliptic when $N>71$ (\cite[p. 451, Theorem 2]{OggHyp}) while $X_0^+(p^r)$, with $p$ prime and $r$ a positive integer, is not hyperelliptic when its genus is bigger than $2$ (\cite[p. 370, Theorem B]{Has97}). Furthermore, $\Xns(p)$ is not hyperelliptic when $p\geq 11$ and $\Xns^+(p)$ is not hyperelliptic when $p\geq 13$ (\cite[p. 76, Theorem 1.1]{DoseCartan}). Hence, in all these cases, we need to look only for equations of degree $d=2\text{, }3$ for the image of $\varphi$. This is done in the following way.
\subsection{Algorithm description}\label{alg:bettermodel}
Consider the basis $\mathcal{B}=\{f_1,\dots,f_g\}$ of $\mathcal S_2(\Gammait)$. We can think of the $f_i$ as power series in $\mathbb C[[q]]$ through their Fourier $q$-expansion. Let's fix the degree $d$ of the equations and suppose we know the first $m$ coefficients of such power series, with $m>d(2g-2)$. This condition on $m$ guarantees that if we have a polynomial $F$ with rational coefficients and $g$ unknowns such that $F(f_1,\ldots,f_g)\equiv 0 \pmod{q^{m+1}}$, then $F(f_1,\ldots,f_g)=0$ (see \cite[Section 2.1, Lemma 2.2, p. 1329]{BGGP05}). Now we evaluate all the monomials of degree $d$ in the basis $\mathcal{B}$, obtaining elements of $\mathbb C[[q]]$ of which we know the first $m$ coefficients. In this way we get $m$ vectors generating a subspace $S$ of $\mathbb C^k$ where $k$ is the number of monomials of degree $d$ with fixed coefficient. 
A basis of the space $S^{\perp}$, the orthogonal space to $S$ in $\mathbb C^k$, gives the coefficients of the desired equations for the image of $\varphi$. These coefficients belong to a number field that depends on the choice of the basis $\mathcal{B}$ and it is not necessarily $\QQ$. Using representation theory of $\GL_2(\ZZ/p\ZZ)$, one can find a basis $\mathcal{B}$ of cusp forms such that the equations obtained are defined over $\mathbb Q$ (see \cite{Merns} for more details).


Once we have equations defined over $\QQ$ we can assume that their coefficients belong to $\ZZ$. In this case we are interested in reducing the size of the coefficients of these equations and minimizing the number of primes $\ell$ such that the model has bad reduction modulo~$\ell$. There is a number field $K$ which contains all the coefficients of all the elements of $\mathcal{B}$ (\cite[p. 234, Theorem 6.5.1]{DS}). We can assume that the Fourier coefficients of the basis $\mathcal{B}$ are algebraic integers, hence their coordinates with respect to a suitable chosen basis of $K$ over $\QQ$ belong to~$\ZZ$. To reduce the size of the coefficients of the equations we can apply the LLL algorithm, first to the Fourier coefficients of $\mathcal{B}$ and then to the $\ZZ$-basis of the space $S^{\perp}$. We know that if the elements of $\mathcal{B}$ are linearly dependent modulo a prime number $\ell$, then the canonical model of the curve that we find is singular modulo $\ell$. In \cite[Algorithm 2.1]{Mer0} there is a description of how to modify $\mathcal{B}$ such that the elements of $\mathcal{B}$ are linearly independent for each prime~$\ell$.



\section{Equations of modular double covers}\label{doublecovers}

Let $\pi\colon X\rightarrow Y$ be a double cover of modular curves of the type $\Xs(p)\rightarrow\Xs^+(p)$ or $\Xns(p)\rightarrow \Xns^+(p)$ where $p$ is a prime number. We have $j_X=j_Y\circ \pi$ where $j_X$ and $j_Y$ are the $j$-maps to $X(1)\cong\mathbb P^1$ of respectively $X$ and $Y$. In this section we describe a general strategy for determining equations for $X$ and $\pi$ starting from existing equations of $Y$ and $j_Y$. This is  basically the same strategy used in \cite{DFGS} to find equations for the modular double cover $X_{\text{ns}}(11)\rightarrow X_{\text{ns}}^+(11)$ and we describe here how it can be applied in general. In section~\ref{Results} we apply it successfully to the double covers $\Xs(13)\rightarrow \Xs^+(13)$ and $\Xns(13)\rightarrow \Xns^+(13)$.

Let $K$ be the function field over $\mathbb Q$ of $Y$ and let $p_1=0,\dots,p_k=0\in\mathbb Q[x_1,\dots,x_h]$ be affine equations for $Y$. We have $K\cong \mathbb Q(x_1,\dots,x_h)/(p_1,\dots,p_k)$. Let now $L$ be the function field of $X$ over $\mathbb Q$, and let $K\subset L$ be the field inclusion given by the morphism $\pi$. We would like to express $L$ in the form
$$L=K(\sqrt{q})$$
where $q$ is a square-free polynomial in $K$, so that equations for $X$, possibly singular, are $p_1=0,\dots,p_k=0,t^2-q=0$ in the variables $x_1,\dots,x_h,t$. We start by observing the following facts.
\begin{prop}\label{ramification}
Let $L=K(\sqrt{q})$ for some non-square element $q$ in $K$. Then the points of $Y$ over which the morphism $\pi$ ramifies are exactly the zeros and poles of odd order for~$q$.
\end{prop}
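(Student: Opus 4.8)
The plan is to analyze the ramification locus of the degree-2 extension $L = K(\sqrt{q})/K$ by passing to completions at places of $K$, using the standard fact that ramification in a quadratic extension is detected locally. Let $P$ be a closed point of $Y$, corresponding to a place $v$ of $K$ with valuation $\mathrm{ord}_P$ and completion $K_P$. Since $[L:K]=2$, the morphism $\pi$ is unramified at $P$ (i.e., above $P$) if and only if $L \otimes_K K_P$ is an unramified étale $K_P$-algebra, and ramified otherwise; so it suffices to determine, in terms of $\mathrm{ord}_P(q)$, when $K_P(\sqrt{q})/K_P$ is unramified.

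First I would reduce to the case where $K_P$ has residue characteristic $\neq 2$, which holds here because we work over $\mathbb{Q}$ and the relevant function fields are function fields of curves over $\mathbb{Q}$; more precisely, after possibly a harmless extension of the constant field the residue fields have characteristic $0$, so $2$ is a unit and Kummer theory applies cleanly. Writing $n = \mathrm{ord}_P(q)$ and choosing a uniformizer $\varpi$ at $P$, we have $q = u \varpi^{n}$ with $u$ a unit. If $n$ is even, say $n = 2m$, then $\sqrt{q} = \varpi^{m}\sqrt{u}$ and the extension is $K_P(\sqrt{u})$, which is unramified (it is either trivial or the unramified quadratic extension, since $u$ is a unit and the residue characteristic is not $2$). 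If $n$ is odd, then $v(q)$ is odd, so $v$ extends to $L$ with ramification index $2$: the value group must absorb a square root of a uniformizer, forcing $e=2$, hence $\pi$ ramifies at $P$. This is exactly the dichotomy claimed: $\pi$ ramifies over $P$ precisely when $\mathrm{ord}_P(q)$ is odd, i.e.\ $P$ is a zero or pole of odd order for $q$.

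The main obstacle — really the only subtle point — is making sure the local argument is valid, i.e.\ that $2$ is invertible in the relevant residue fields so that Kummer theory gives the clean ``$n$ even $\Rightarrow$ unramified, $n$ odd $\Rightarrow$ ramified'' statement. In residue characteristic $2$ quadratic extensions can be wildly ramified even when $\mathrm{ord}_P(q)$ is even (Artin–Schreier phenomena), so one genuinely uses that these are modular curves over $\mathbb{Q}$ with characteristic-$0$ residue fields. Once that is in hand, the two computations above are routine, and one concludes by noting that a place $P$ with $\mathrm{ord}_P(q)$ even but $u$ a nonsquare in the residue field still yields an \emph{unramified} (though possibly inert) extension, so such $P$ are not ramification points — consistent with the statement, which speaks only of zeros and poles of odd order and says nothing about residual behavior at the even-order places.
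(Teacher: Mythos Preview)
Your argument is correct and follows the same approach as the paper: the paper's proof is a one-line citation of Kummer theory (specifically Stichtenoth, Proposition 3.7.3(b)), and what you have written is precisely the local computation underlying that result, spelled out in detail. One minor remark: your hedge about ``possibly a harmless extension of the constant field'' is unnecessary, since the residue fields at closed points of a curve over $\mathbb{Q}$ are already finite extensions of $\mathbb{Q}$ and hence of characteristic zero.
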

\begin{proof}
This is a consequence of Kummer Theory. (\cite[p. 122, Proposition 3.7.3(b)]{Stichtenoth})
\end{proof}
\begin{prop}\label{qfunction}
Let $q$ be a function in $K$ such that $L=K(\sqrt q)$ and let $f$ be another function of $K$ whose zeros and poles of odd order are the points of $Y$ over which the morphism $\pi$ ramifies. If the rational 2-torsion of the Jacobian of $Y$ is trivial, then $L=K(\sqrt{\lambda f})$ for some constant $\lambda \in \mathbb Q$.
\end{prop}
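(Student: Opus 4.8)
The plan is to work inside the group $K^*/(K^*)^2$ and use Kummer theory: a separable quadratic extension of $K$ is determined by the class in $K^*/(K^*)^2$ of the element placed under the square root, so it suffices to show that $q$ and $\lambda f$ have the same class for some $\lambda\in\mathbb Q^*$, i.e.\ that $q/f$ is a constant times a square in $K$.

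First I would compare divisors. By Proposition~\ref{ramification} applied to $L=K(\sqrt q)$, the set of points $P$ of $Y$ at which $\ord_P(q)$ is odd is exactly the ramification locus of $\pi$; by hypothesis this is also the set of points at which $\ord_P(f)$ is odd. Hence $\ord_P(q)-\ord_P(f)$ is even for every $P$, so $\divisor(q/f)=2D$ for a uniquely determined divisor $D$ on $Y$, necessarily of degree $0$. Since $2D=\divisor(q/f)$ is principal, the class $[D]$ is a $2$-torsion element of $\Pic^0(Y)$; and since $q/f\in K$ is defined over $\mathbb Q$, the divisor $\divisor(q/f)$, and hence $D$ (being its unique half in the torsion-free group $\mathrm{Div}(Y)$), is stable under $\operatorname{Gal}(\overline{\mathbb Q}/\mathbb Q)$. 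Thus $[D]$ defines a $\mathbb Q$-rational $2$-torsion class, hence a $\mathbb Q$-rational $2$-torsion point of $\operatorname{Jac}(Y)$, which is trivial by hypothesis. Using a rational cusp of $Y$ to pass from triviality of the class to triviality over $\mathbb Q$, we get $D=\divisor(g)$ for some $g\in K^*$.

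Then $\divisor(q/f)=\divisor(g^2)$, so $q/f=\lambda g^2$ with $\lambda$ a constant; since $Y$ is geometrically irreducible the constant field of $K$ is $\mathbb Q$, so $\lambda\in\mathbb Q^*$. Therefore $q=\lambda f g^2$ with $g\in K^*$, and $L=K(\sqrt q)=K(\sqrt{\lambda f\,g^2})=K(\sqrt{\lambda f})$, which is the claim (and $\lambda f$ is automatically a non-square in $K$, since $K(\sqrt{\lambda f})=L\neq K$).

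I expect the only genuine subtlety to be the rationality bookkeeping in the middle step: ensuring that the divisor $D$ with $2D=\divisor(q/f)$ really yields a $\mathbb Q$-rational $2$-torsion point of the Jacobian, and that the vanishing of this point produces $D=\divisor(g)$ with $g$ in $K$ itself rather than merely in $\overline{\mathbb Q}(Y)$ — this is exactly where the presence of a $\mathbb Q$-rational point (a cusp) on $Y$ enters. Everything else is a direct translation through Kummer theory, as in Proposition~\ref{ramification}, together with elementary manipulations of divisors.
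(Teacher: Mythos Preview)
Your proof is correct and follows the same line as the paper's, only with more explicit attention to the rationality of the function $g$. One minor point: invoking a rational cusp is both unnecessary and inaccurate in the non-split case ($\Xns^+(p)$ has no rational cusp, though it does have rational CM points); the descent from $D$ being principal over $\overline{\mathbb Q}$ to $D=\divisor(g)$ with $g\in K$ follows directly from Hilbert~90, since if $D=\divisor(h)$ with $h\in\overline{\mathbb Q}(Y)$ then $\sigma\mapsto h^\sigma/h\in\overline{\mathbb Q}^{*}$ is a $1$-cocycle and hence a coboundary.
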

\begin{proof}
Proposition \ref{ramification} implies that all the zeros and poles of odd order for both $q$ and $f$ are the points of $Y$ over which the morphism $\pi$ ramifies. This means that the function $q/f$ has zeros and poles of even order, i.e. $\text{div}(q/f)=2D$ for some degree 0 divisor $D$ of $Y$. Since $q$ and $f$ have rational coefficients, the triviality of the rational 2-torsion in the Jacobian implies that $\text{div}(q/f)=\text{div}(h^2)$ for some $h\in K$. Thus $q/f=\lambda h^2$ for some constant $\lambda\in\mathbb{Q}$, so that $\sqrt q$ and $\sqrt{\lambda f}$ generate the same field over $K$.
\end{proof}
Therefore, assuming the hypothesis of Proposition \ref{qfunction}, if we find a function $f$ on $Y$ whose zeros and poles of odd order are the points over which $\pi$ ramifies, then we have determined the function field of $X$ to be of the form $L=K(\sqrt{\lambda f})$ for some constant $\lambda\in\mathbb Q$. Note that in this situation, $f$ and $\lambda$ can be both multiplied by a square (of respectively a rational function or a rational constant), and the field $L$ would still be generated by $\sqrt{\lambda f}$ over $K$. We can then suppose $f$ to be a polynomial function.

\subsection{Determining the ramification points}

The points over which the morphism $\pi$ ramifies could be determined if we have equations for the modular $j$-map $j_Y:Y\rightarrow X(1)\cong\mathbb P^1$. In fact $\pi$ can ramify only over points in which the function $j_Y$ has a pole, a zero or it is equal to $1728$ (see \cite[Chapter 2, Section 2.3]{DS}). In other words, $\pi$ can ramify over a cusp or over those elliptic points of $Y$ whose preimage by $\pi$ does not contain elliptic points of $X$. Furthermore, in both our cases, the number of cusps in $X$ is exactly the double of the number of cusps in $Y$ (see \cite[p.454, Proposition 3]{OggHyp}, \cite[Proposition 7.10]{BaranClass}). 

Let $p$ be a prime and $r$ a positive integer. A study of the elliptic points of $\Xns(p^r)$ and $\Xns^+(p^r)$ can be found in \cite{BaranClass}, while the analogous study for $X_0(N)$ is classical for any positive integer $N$ (\cite[p. 92, Section 3.7]{DS}) and therefore for $\Xs(p^r)\cong X_0(p^{2r})$.
In \cite[Section 2]{OggHyp} the author computes the number of ramification points of the map $X_0(N)\rightarrow X_0^+(N)$ (and therefore of $\Xs(p^r)\rightarrow \Xs^+(p^r)$), but he does not determine the order of the elliptic points associated. 

\begin{rem}\label{EllipticPointsRemark}
A direct computation with cosets representatives shows that the number of elliptic points $e_2$ and $e_3$ for the curve $\Xs(p^r)$ is
\[
e_2=\begin{cases}
2 & \text{if } p\equiv 1 \pmod 4 \\
0 & \text{otherwise}
\end{cases}\quad
e_3=\begin{cases}
2 & \text{if } p\equiv 1 \pmod 3 \\
0 & \text{otherwise}
\end{cases}
\]
while for the curve $\Xs^+(p^r)$ we have
\[
e_2=\begin{cases}
1+\frac{p^{r-1}}{2}(p-1) & \text{if } p\equiv 1 \pmod 4 \\
\frac{p^{r-1}}{2}(p+1) & \text{if } p\equiv 3 \pmod 4 \\
2^{r-1} & \text{if } p=2
\end{cases}\qquad
e_3=\begin{cases}
1 & \text{if } p\equiv 1 \pmod 3 \\
0 & \text{otherwise}
\end{cases}
\]
\end{rem}

\subsection{Determining \texorpdfstring{$f$}{f}}\label{Searchingf}
Once we have determined the coordinates of the points $P_1,\dots,P_r$ of $Y$ over which $\pi$ ramifies, one can try to find a rational function $f$ on $Y$, whose zeros and poles of odd order are exactly $P_1,\dots,P_r$. A procedure for doing this, which does not necessarily gives a solution, but it has been successful for $\Xns^+(11)$, $\Xns^+(13)$ and $\Xs^+(13)$ is the following.

Let $D$ be the divisor on $Y$ given by $P_1+\dots+P_r$. This divisor is defined over $\mathbb Q$ because $\pi$ is defined over $\mathbb Q$. Furthermore $r$ must be even because of the Riemann-Hurwitz formula. Let $Q_1,\dots,Q_k$ be the expected rational points on $Y$, which are only the CM-points of class number one, when $Y=\Xns^+(p)$ or the CM-points of class number one plus the rational cusp, when $Y=\Xs^+(p)$. Then we compute Riemann-Roch spaces of the type
$$B(n_1,\dots,n_k)\mathop{=}^\textnormal{def}H^0\left(-D+\sum_{i=1}^k n_iQ_i\right)$$
where $n_i$ is even for every $i$ and $\displaystyle\sum_{i=1}^kn_i=r$. Since the divisor $\displaystyle -D+\sum_{i=1}^k n_iQ_i$ has degree $0$ then $B(n_1,\dots,n_k)$ has dimension $0$ or $1$. If we find such a space of dimension $1$ and we can give a basis of rational functions of it, then we have found our function $f$.

\subsection{Verifying the triviality of the rational 2-torsion in the Jacobian}

Let $A$ be an abelian variety over $\mathbb Q$ and let $\ell\neq 2$ be a prime of good reduction for $A$. If $A$ has a nontrivial rational point of $2$-torsion, then the number of points of $A$ over $\mathbb F_{\ell}$ must be even because $\ell\neq 2$ and the reduction modulo $\ell$ is an isomorphism on the $2$-torsion points of $A$. 
 Hence, in our case, to prove the triviality of the rational $2$-torsion in the Jacobian $\text{Jac}(Y)$ of $Y$, it is enough to find a prime number $\ell\neq 2,p$ such that the quantity $\#\textnormal{Jac}(Y)(\mathbb F_{\ell})$ is odd.

Computing $\#\textnormal{Jac}(Y)(\mathbb F_{\ell})$ can be done recalling that the Jacobian of $Y$ is isogenous to some factor of the Jacobian of $X_0^+(p^2)$, the whole Jacobian in the split case, and the new part in the non-split case. Therefore we need to compute the number of $\mathbb F_{\ell}$-rational points of such factor which can be done using the Eichler-Shimura relations that relate the characteristic polynomial of the $\ell$-th Frobenius endomorphism acting on the Jacobian of $X_0(p^2)$ with the characteristic polynomial of the Hecke operator $T_{\ell}$ (for an example of this, see the proof of \cite[p. 76, Theorem 1.1]{DoseCartan}.

\subsection{Determining \texorpdfstring{$\lambda$}{lambda}}\label{lambda}

Once we have equations of $X$ up to multiplication by a constant $\lambda\in Q$, we can determine $\lambda$ by analyzing the field of definitions of special values of the function $f$.

Let $Q\in Y$ be a rational CM-point, therefore of class number one. Then the two points in $X$ over $Q$ are defined over the CM-field of the elliptic curve associated to $Q$ (see \cite[pp. 194-195]{SerreMordell}). This allows us to determine the constant $\lambda$ because $\sqrt{\lambda f(Q)}$ must generate the CM-field.

\section{Results for level 13}\label{Results}

We recall that $\Xs^+(13)$ and $\Xns^+(13)$ are both curves of genus 3. They are isomorphic, and an equation for both of them is the following quartic in $\mathbb P^2$ 
\begin{align}\label{BurcuEq}
p(X,Y,Z)\eqdef&(-Y-Z)X^3+(2Y^2+ZY)X^2+(-Y^3+ZY^2-2Z^2Y+Z^3)X+\\+&(2Z^2Y^2-3Z^3Y)=0 \nonumber
\end{align}
Starting from this equation it is possible to obtain formulas for the different $j$-maps of both $\Xs^+(13)$ and $\Xns^+(13)$ (see \cite{Baran13} for details and explicit formulas) which we will call respectively $j_\textnormal{s}$ and $j_\textnormal{ns}$.

\subsection{Singular Equations of \texorpdfstring{$\Xs(13)$ and $\Xns(13)$}{Xs(13) and Xns(13)} in \texorpdfstring{$\mathbb{A}^3$}{A3}}\label{singularmodels}

Here we apply the strategy described in Section \ref{doublecovers} to obtain equations of $\Xs(13)$ and $\Xns(13)$ starting from the known same equation \ref{BurcuEq} of $\Xs^+(13)$ and $\Xns^+(13)$.\newline

We begin by looking for the coordinates (in the model given by equation \ref{BurcuEq}) of the points of $\Xs^+(13)$ and $\Xns^+(13)$ over which the two modular double covers $\pi_\textnormal{s}\colon \Xs(13)\rightarrow \Xs^+(13)$ and $\pi_\textnormal{ns}\colon\Xns(13)\rightarrow\Xns^+(13)$ ramify. Remark \ref{EllipticPointsRemark} and \cite[p. 2768, Proposition 7.10]{BaranClass} tell us that both these double covers ramify over six elliptic points of order $2$. These implies that such points are among the simple zeros of the function $j-1728$. In \cite[Appendix A]{Baran13}, we can find explicit affine formulas for $j_\textnormal{s}$ and $j_\textnormal{ns}$ which are both in the form $j(x,y)=h(x,y)/k(x,y)$, where $h$ and $k$ are polynomials with integer coefficients, $x\eqdef X/Z$ and $y\eqdef Y/Z$. 

Then to find the simple zeros of $j(x,y)-1728$ we compute the resultant with respect to $x$ of the polynomial $p(x,y,1)$, defining the affine equation of the curve, and the polynomial $h(x,y)-1728 k(x,y)$. This resultant has the form:
$$(x-1)(43x^6 - 194x^5 - 115x^4 + 692x^3 + 85x^2 - 498x + 243)\varphi(x) $$
for the split case and
$$(2888x^6 + 12500x^5 + 13443x^4 + 24786x^3 + 134781x^2 + 230254x + 120131)\psi(x)$$
for the non-split case, where $\varphi$ and $\psi$ are a product of irreducible polynomials with multiplicity higher than one. Using MAGMA (\cite{MAGMA}) we can check that both the polynomials of degree~6, made explicit above, define functions on $\Xs^+(13)\cong\Xns^+(13)$ with 18 simple zeros, which are divided in two Galois orbits of cardinality 12 and 6. We call $P_1,\dots,P_6$ the simple zeros in the Galois orbit of cardinality 6. Since the modular double covers $\pi_\textnormal{s}$ and $\pi_\textnormal{ns}$  are defined over $\mathbb Q$, the set of points that ramify in one of them must be composed of entire Galois orbits. This implies that such points are exactly the points $P_1,\dots,P_6$.\newline

The Jacobian of $\Xs^+(13)\cong\Xns^+(13)$ doesn't have any rational torsion (\cite[p.60, Example 12.9.3.]{BPSgenus3}). Hence, Propositions \ref{ramification} and \ref{qfunction} imply that we can determine the function fields of $\Xs(13)$ and $\Xns(13)$ up to a constant, by adding to the function field of $\Xs^+(13)\cong\Xns^+(13)$ the square root of a function $f$ whose zeros and poles of odd order are the points $P_1,\dots,P_6$. Following the procedure described in Section \ref{Searchingf}, and using MAGMA, we compute a basis of each Riemann-Roch space of type $$H^0\left(-(P_1+\dots+P_6)+\sum_{i=1}^7 n_iQ_i\right)$$ 
with $n_1,\dots,n_7$ even integers such that $-12\leq n_1,\dots,n_7\leq 12$ and $\displaystyle \sum_{i=1}^7 n_i=6$, and where $Q_1,\dots,Q_7$ are the 7 rational points of $\Xs^+(13)\cong\Xns^+(13)$ associated to the rational CM-points of class number one, or to the rational cusp in the case of $\Xs^+(13)$ (see \cite[p. 275, Table 1.1]{Baran13}).

With these conditions on the coefficients, the divisors $\displaystyle -(P_1+\dots+P_6)+\sum_{i=1}^7 n_iQ_i$ have degree~$0$, hence the associated Riemann-Roch spaces have dimension 0 or 1. There are indeed a few of these spaces which are non-trivial. Among them, we choose one that is associated to a divisor of the lower degree for which the associated space is non-trivial. For the split case we choose the space generated by 
\begin{align*}
f_\textnormal{s}(x,y)=&\frac{6y^5 + 18y^4 - 17y^3 - 37y^2 - 5y + 3}{y^4}x^2 + \\
+&\frac{-5y^5 - y^4 + 18y^3 - 17y^2 - y + 6}{y^3}x + \\
+&\frac{10y^5 - 6y^4 - 38y^3 + 28y^2 + 14y - 3}{y^4},
\end{align*}
which is a function of degree 10, and we choose for the non-split case the space generated by
\begin{align*}
f_\textnormal{ns}(x,y)=&\frac{-11y^8 - 20y^7 - 41y^6 - 85y^5 - 260y^4 - 586y^3 - 635y^2 - 312y - 56}{y^4}x^2 +\\
  +& \frac{22y^9 + 29y^8 + 51y^7 + 109y^6 + 372y^5 + 866y^4 + 1124y^3 + 840y^2 + 340y +
    56}{y^4}x + \\
    +& \frac{-11y^9 + 2y^8 - y^7 + 8y^6 - 70y^5 - 68y^4 - 453y^3 - 1016y^2 - 740y
    - 168}{y^3},
\end{align*}
which is a function of degree 12. We can get rid of denominators by multiplying $f_\textnormal{s}$ and $f_\textnormal{ns}$ by $y^4$ which is a square in the function field of $\Xs^+(13)\cong\Xns^+(13)$. We obtain the polynomial functions 
\begin{align*}
q_\textnormal{s}(x,y)=&(6y^5 + 18y^4 - 17y^3 - 37y^2 - 5y + 3)x^2 + 
(-5y^6 - y^5 + 18y^4 - 17y^3 - y^2 + 6y)x + \\
+&10y^5 - 6y^4 - 38y^3 + 28y^2 + 14y - 3\\
&\qquad\\
q_\textnormal{ns}(x,y)=&(-11y^8 - 20y^7 - 41y^6 - 85y^5 - 260y^4 - 586y^3 - 635y^2 - 312y - 56)x^2 +\\
  +&(22y^9 + 29y^8 + 51y^7 + 109y^6 + 372y^5 + 866y^4 + 1124y^3 + 840y^2 + 340y +
    56)x + \\
    +& -11y^{10} + 2y^9 - y^8 + 8y^7 - 70y^6 - 68y^5 - 453y^4 - 1016y^3 - 740y^2
    - 168y.
\end{align*}
Now we can determine the constant $\lambda$ of Section \ref{lambda}. Note that 
\begin{align*}
q_\textnormal{s}(0,0)=-3 &\qquad q_\textnormal{s}(0,3/2)=-3^3\cdot 2^{-4}\\ q_\textnormal{ns}(-1,0)=-7\cdot 2^4 &\qquad q_\textnormal{ns}(0,3/2)=-163\cdot 2^{-10}\cdot 3^{10}
\end{align*}
which is consistent with \cite[p. 275, Table 1.1]{Baran13}. We have thus obtained singular models in $\mathbb A^3$ for $\Xs(13)$ and $\Xns(13)$ given respectively by the equations $\begin{sistema}p(x,y,1)=0 \\ t^2-q_\textnormal{s}(x,y)=0\end{sistema}$ and $\begin{sistema}p(x,y,1)=0 \\ t^2-q_\textnormal{ns}(x,y)=0\end{sistema}$. For both curves, the double cover over $\Xs^+(13)\cong\Xns^+(13)$ has equation $(x,y,t)\mapsto (x,y)$.

\subsection{Smooth Equations of \texorpdfstring{$\Xs(13)$ and $\Xns(13)$}{Xs(13) and Xns(13)} in \texorpdfstring{$\mathbb{P}^7$}{P7}}\label{canonicalmodels}

To find equations describing $\Xs(13)$ it is enough to take a basis for $\mathcal{S}_2(\Gammait_0(169))$ using the software MAGMA (\cite{MAGMA}) or William Stein's tables (\cite{SteinTab}). Then operate as shown in the proof of Corollary 6.5.6 in \cite[p. 238]{DS} to find a basis with rational integer Fourier coefficients. Finally, we apply the method explained in Section \ref{sec:canmod}. The equations obtained are shown in the Appendix.

The equations for the map $\pi_{\textnormal s}\colon \Xs(13)\to \Xs^+(13)$, using this model for $\Xs(13)$ are obtained in the following way. Let $\mathcal{B}=\{f_1,\ldots,f_g\}$ be a basis of eigenforms for $\mathcal{S}_2(\Gammait_0(p^2))$ and we assume that the first $g^+$ elements of $\mathcal{B}$ are invariant with respect of the action of $w_{p^2}$. Hence, $\mathcal{B}^+=\{f_1,\ldots,f_{g^+}\}$ is a basis for $\Omega^1(\Xs^+(p))$. So, the canonical embedding give a canonical model $\mathcal{C}$ in $\PP^{g-1}$ for $\Xs(p)$ using $\mathcal{B}$ and a canonical model $\mathcal{C}^+$ in $\PP^{g^+-1}$ for $\Xs^+(p)$ using $\mathcal{B}^+$, and we have the morphism
\begin{align*}
\pi\colon \mathcal{C} &\longrightarrow \mathcal{C}^+, \\
(x_1:\ldots:x_{g^+}:x_{g^++1}:\ldots:x_g)&\longmapsto(x_1:\ldots:x_{g^+}).
\end{align*}

Let $t$ and $t^+$ be the compositions of the invertible projective linear transformations of $\mathbb P^{g-1}$ and $\mathbb P^{g^+-1}$ respectively, that we use in algorithm \ref{alg:bettermodel} to obtain better models of $\Xs(p)$ and $\Xs^+(p)$.
 We have the following commutative diagram

\[
\begin{xy}
(0,20)*+{\mathcal{C}}="z"; (20,20)*+{\mathcal{C}_1}="x";
(0,0)*+{\mathcal{C}^+}="y"; (20,0)*+{\mathcal{C}^+_1}="yy";
{\ar@{->} "z";"x"}?*!/_2mm/{t};
{\ar "z";"y"}?*!/^2mm/{\pi};
{\ar "x";"yy"}?*!/_3mm/{\pi_1};
{\ar@{->} "y";"yy"}?<<<<<<*!/^3mm/{t^+};
\end{xy}
\]
where $\pi_1$ is just the composition $t^{-1}\circ\pi\circ t^+$. In the case $p=13$, $\mathcal C_1^+$ is the model given by equation \ref{BurcuEq} and $\pi_1$ is $\pi_\textnormal{s}$.
\bigskip

Finding equations describing $\Xns(13)$ is more difficult because we don't have the Fourier coefficients of a basis for $\mathcal{S}_2(\Gammait_{\text{ns}}(13))$. To find these Fourier coefficients we use a basis of $\mathcal{S}_2(\Gammait_0(169))^{\text{new}}$ and some representation theory of $G:=\GL_2(\ZZ/p\ZZ)$. We use the newforms because the jacobian of $X_{ns}(p)$ is isogenous over $\mathbb Q$ to the new part of the jacobian of $X_0(p^2)$ ( see \cite{Chen}, \cite{Edix}).

The irreducible complex representations of the finite group $G$ are divided into three kinds: representations of dimension $p-1$, representations of dimension $p$ and representations of dimension $p+1$. The representations of dimension $p-1$ are also called cuspidal representations (the name has nothing to do with cusp forms) and this kind of representations is parametrized by characters $\theta\colon\FF_{p^2}^*\to \CC^*$. The other two kinds of representations are called principal series representations and are parametrized by characters $\mu$ of the upper triangular matrices subgroup of $G$. We have a representation of dimension $p$ if $\mu$ is the quadratic character, and we have a representation of dimension $p+1$ otherwise.

Let $V_f$ be the $\CC[G]$-span of an element $f$ of a basis of eigenforms for $\mathcal{S}_2(\Gammait_0(p^2))^{\text{new}}$. We know that $V_f$ is a complex irreducible representation that is a principal series representation if $f$ is a twist of a form $h$ of $\mathcal{S}_2(\Gammait_1(p))$,
i.e. if the Fourier coefficient $a_n$ of $f$ is equal, for each $n$, to $\chi(n)b_n$, where $\chi$ is a character of $\FF_p$ and $b_n$ is the $n$-th Fourier coefficient of $h$. The dimension of $V_f$ is $p$ if the form $h$ is in $\mathcal{S}_2(\Gammait_0(p))$ and is $p+1$ otherwise. If $f$ is not a twist of a low level form, then $V_f$ is a cuspidal representation. Then, one can find elements invariant under the action of a non-split Cartan subgroup of $G$ using the related trace.

We have that $\mathcal{S}_2(\Gammait_0(169))^{\text{new}}$ has dimension $8$ and let $\mathcal{B}=\{f_1,\ldots,f_8\}$ be a basis of eigenforms of $\mathcal{S}_2(\Gammait_0(169))^{\text{new}}$. Three of the forms in $\mathcal{B}$ are conjugate with respect to the Galois action and form a basis for the $w_N$-invariant forms in $\mathcal{S}_2(\Gammait_0(169))^\textnormal{new}$;
they are not twist of some lower level form, so the irreducible representations associated are all cuspidal. Two Galois conjugate forms in $\mathcal{B}$ are twists of a form of $\mathcal{S}_2(\Gammait_1(13))^{\text{new}}$, which is a complex vector space of dimension $2$. The last three forms in $\mathcal{B}$ are conjugate with respect to the Galois action and they are not twists of some lower level form, so the associated irreducible representations are all cuspidal.
The equations obtained are written in the Appendix together with the equations for the map $\pi_{\textnormal{ns}}\colon \Xns(13)\to \Xns^+(13)$ which are obtained analogously to the split case.

\section{Maps from the canonical models to other models}\label{maps}

To compute maps from the canonical model $\mathcal{C}$ to a different model $\mathcal{C}'$, we use the reverse-mapping correspondence between curves and function fields. What we do is finding an injective field homomorphism $\iota$ from the function field $\mathcal F'$ of $\mathcal{C}'$ to the function field $\mathcal F$ of the canonical model $\mathcal C$. To achieve this, we need a way to go from rational functions on the canonical model to their Laurent $q$-expansion and vice versa. One direction is easy. Indeed, we know that the $x_i$ in the equations of the canonical models in Section \ref{canonicalmodels} correspond to elements in a specific basis of cusp forms, that we found beforehand. On the other hand recognizing Laurent $q$-series as rational functions in the $x_i$ requires more work.


Lets place ourself in the affine chart of $\mathcal C$ where $x_8\neq 0$. The function field of $\mathcal C$ is generated by the functions $\frac{x_1}{x_8},\ldots,\frac{x_7}{x_8}$ which are all well defined in the affine chart we chose. 
Let $h_i$ be the Laurent $q$-expansion of $\frac{x_i}{x_8}$ for $i=1,\ldots,7$. Let $f$ be an element of $\mathcal F'$ and suppose we know the Laurent $q$-expansion of $\iota(f)\in \mathcal F$. We want to write $\iota(f)$ in the form
\[
\iota(f)=\frac{p(h_1,\ldots,h_7)}{q(h_1,\ldots,h_7)},
\]
where $p$ and $q$ are suitable polynomials. We write the previous equality as
\[
p(h_1,\ldots,h_7)-\iota(f)q(h_1,\ldots,h_7)=0.
\]
where the left hand side above can be seen as a linear combination of Laurent $q$-series, assuming we know the degree of the polynomials $p$ and $q$. 
 

Therefore, if we know the first $m$ Laurent coefficients of $\iota(f),h_1,\dots,h_7$, with $m>d(2g-2)=14d$ and $d$ is the maximum of the degrees of $p$ and $q$, it is easy to compute the coefficients of $p$ and $q$ in the same way explained in the algorithm \ref{alg:bettermodel} description for the coefficients of the polynomial $F$, i.e. we have $m$ vectors generating a subspace $S$ and we want a basis of $S^{\perp}$. If we don't know the degree of $p$ and $q$, we make computations just trying sufficiently large degree of $p$ and $q$ until we find some non-trivial relations among Laurent coefficients.

\subsection{Map to Kenku's affine plane model of $X_0(169)$}
\allowdisplaybreaks
In \cite{Kenku169}\cite{Kenku169errata}, Kenku gives an explicit plane affine model of $X_0(169)$ which is naturally isomorphic to $\Xs(13)$. Let $X$ and $Y$ be the coordinates in the affine model of $X_0(169)$ described in Kenku's paper. They correspond to Puiseux $q$-series obtained by
\[
X(\tau)=\frac{13\eta^2(169\tau)}{\eta^2(\tau)}, \qquad Y(\tau)=\frac{\eta^2(\tau)}{\eta^2(13\tau)},
\]

where $\eta$ is the classical Dedekind eta function. Here we have Puiseux series instead of Laurent series but the method is the same. We consider the affine model for $\Xs(13)$ as the affine chart where $x_8\neq 0$ in the projective model described by equations in Section \ref{canonicalmodels}. Then we have the field isomorphism
\begin{align*}
\iota\colon \mathbb Q(X_0(169))&\longrightarrow  \mathbb Q(\Xs(13))\\
X&\longmapsto U\\
Y&\longmapsto V,
\end{align*}
where
\begin{align*}
\mathrm{numerator}(U)&=117x_1^2 - 13x_1x_2 + 13x_1x_3 + 13x_4x_6 + 13x_4x_7 + \\
&+ 26x_4x_8 - 13x_5^2 +    13x_6x_8 + 13x_7^2, \\
\mathrm{denominator}(U)&=238x_3^2 + 215x_3x_4 + 215x_3x_5 + 429x_3x_6 - 419x_3x_7 + 36x_3x_8 - 89x_4^2 +   \\
&+185x_4x_5 + 130x_4x_6 - 505x_4x_7 - 313x_4x_8 + 305x_5^2 + 217x_5x_6 +   \\
&+145x_5x_7 - 46x_5x_8 + 28x_6^2 - 7x_6x_7 + 352x_6x_8 + 351x_7^2 - 3x_7x_8 -  2x_8^2; \\
\mathrm{numerator}(V)&=4637022x_1^2 + 4624659x_4x_6 - 5060016x_4x_7 + 14784393x_4x_8 - 6782997x_5^2+ \\
& -    19275477x_5x_6 - 8559018x_5x_7 + 1545960x_5x_8 - 28694289x_6^2 -    8134854x_6x_7 +\\
&- 4473261x_6x_8 + 6858072x_7^2 + 2366208x_7x_8 - 3989778x_8^2, \\
\mathrm{denominator}(V)&=-209376188x_3^2 - 196485388x_3x_4 - 183091120x_3x_5 - 421799299x_3x_6 +    \\
&+371436944x_3x_7 - 136573881x_3x_8 + 89731271x_4^2 - 151182225x_4x_5 +\\
&-    140218639x_4x_6+ 488527387x_4x_7 + 280939604x_4x_8 - 277852129x_5^2 +\\
&-    207933217x_5x_6 - 146929317x_5x_7 + 17764144x_5x_8 - 15033364x_6^2 + \\
&+   3885141x_6x_7 - 323708963x_6x_8 - 329322311x_7^2 - 3989778x_7x_8.
\end{align*}

\subsection{Desingularization maps to the affine models of section \ref{singularmodels}}\label{desingularization}

The function fields defined by the equations found in Section \ref{canonicalmodels} and the function fields defined by the equations found in Section \ref{singularmodels} are both isomorphic to the function field of the associated modular curve, which is $\Xs(13)$ or $\Xns(13)$. Here we give an explicit isomorphism between the function fields defined by the two models, in both the split and the non-split case.

Let $\mathcal C$ be the smooth projective model defined in Section \ref{canonicalmodels} and let $\mathcal C'$ be the singular affine model defined in Section \ref{singularmodels}. We have the following situation
$$
\xymatrix
{
\mathcal C \ar[dr]_\pi\ar@{-->}[rrr]^\varphi & & &   \mathcal C'\ar[dl]^{\pi'}\\
& \mathcal C^+_\textnormal{p}& \mathcal C^+_\textnormal{a}\ar@{_{(}->}[l]
}
$$
where $\mathcal C^+_\textnormal{p}$ is the curve defined by equation \ref{BurcuEq}, $\mathcal C^+_\textnormal{a}$ is the affine chart in which $Z\neq 0$, the map $\pi$ is $\pi_\textnormal{s}$ or $\pi_\textnormal{ns}$ depending on whether we are dealing with the split or the non-split case, $\pi'$ is the double cover given by $(x,y,t)\mapsto (x,y)$ and $\varphi$ is a birational map that makes the diagram commute on some affine chart of $\mathcal C$. The isomorphism from the function field of $\mathcal C'$ generated by $x,y,t$ to the function field of $\mathcal C$ is given in the form
\begin{align*}
\varphi^* \colon \mathbb Q(\mathcal C')&\mathop{\longrightarrow}^\cong \mathbb Q(\mathcal C)\\
x&\longmapsto X/Z\\
y&\longmapsto Y/Z\\
t&\longmapsto \tilde t\\
\end{align*}
where $X,Y,Z$ are the one defined in Section \ref{canonicalmodels} in the equations of $\pi_\textnormal{s}$ and $\pi_\textnormal{ns}$, and $\tilde t\eqdef s\cdot(Y/Z)^2$, with $s$ being a square root of $\pi_\textnormal{s}^* f_\textnormal{s}$ or $\pi_\textnormal{ns}^* f_\textnormal{ns}$ depending on whether we are dealing with the split or the non-split case. To determine $s$ we take a square root of the Laurent $q$-expansion of $\pi_\textnormal{s}^* f_\textnormal{s}$ or $\pi_\textnormal{ns}^* f_\textnormal{ns}$ and then we recognize it as a rational function in the $x_i$, as explained in the beginning of Section \ref{maps}. In the split case we get
\begin{equation*}
s=\frac{4x_1 - x_2 - x_3 + x_4 - 3x_5 - x_6 - 2x_7 + x_8}{-x_2 + x_3 + x_4}
\end{equation*}
and for the non-split case we get
\begin{align*}
\mathrm{numerator}(s)&=78953974807x_1^2 + 26x_1x_2 - 25x_1x_3 - x_1x_4 + 2x_3^2 +\\ 
&+238115162692x_4x_7 +     209337250703x_4x_8 - 582346348536x_5^2 +\\
&+ 727177285412x_5x_6 + 78542213920x_5x_7 - 563548816331x_5x_8 +\\ 
&+ 65380280758x_6^2 - 244488381626x_6x_7 + 82647686352x_6x_8 +\\
&+ 136959277010x_7^2 +     250609762421x_7x_8 - 257891423548x_8^2, \\
\mathrm{denominator}(s)&=33279035581x_3^2 - 20440236060x_3x_4 + 161001990516x_3x_5 +\\
&+ 177481085270x_3x_6     - 284601313488x_3x_7 - 214125958084x_3x_8+\\
 &- 116902000189x_4^2 +     103367036819x_4x_5 + 124067876928x_4x_6+\\
  &- 155405328616x_4x_7 -     193679032128x_4x_8 - 57688123584x_5^2+\\
   &- 123976858837x_5x_6 -     194732784800x_5x_7 - 165341806053x_5x_8+\\
    &- 126114432327x_6^2 +     524882113804x_6x_7 + 271440452599x_6x_8+\\
     &- 236487356215x_7^2 -     365606104840x_7x_8 - 113208254802x_8^2.
\end{align*}
\allowdisplaybreaks[0]

\section{Appendix: Equations for the canonical models of $\Xs(13)$ and $\Xns(13)$}

\allowdisplaybreaks

The curve $\Xs(13)$ of $g=8$ can be explicitly given by the following 15 equations in $\PP^{7}$.

\begin{align*}
&x_1x_2 - x_1x_3 - x_2^2 - x_2x_4 + x_2x_5 + x_3x_6 - x_3x_7 - x_4x_6 - x_4x_7 - x_4x_8=0 \\
&-x_1^2 + 2x_1x_2 + x_1x_4 - x_1x_5 + x_1x_6 + x_3^2 + x_3x_5 + x_3x_6 - x_3x_7 - x_4^2 + x_4x_5 - x_4x_7 + \\
&\quad - x_5x_8 + x_6^2 + x_6x_8 + x_7^2=0 \\
&x_1^2 + x_1x_3 - x_1x_4 + x_1x_5 + x_1x_7 + x_1x_8 - x_3x_4 + x_3x_7 + x_3x_8 + x_4^2 - 2x_4x_5 - x_5x_6 + \\
&\quad +x_5x_7 + x_5x_8 - x_6^2=0 \\
&-x_1x_6 - 2x_1x_8 + x_2^2 + x_2x_4 - x_2x_5 - x_3x_6 - x_3x_8 - x_4x_6 + x_4x_8 + x_5x_6 - x_5x_7 + \\
&\quad - x_5x_8 + x_6^2=0 \\
&-x_1x_2 + x_1x_3 - x_1x_4 - 2x_1x_6 + x_2^2 - x_3^2 - x_3x_5 - 2x_4^2 + x_4x_6 +x_5^2 + x_5x_6=0 \\
&x_1x_2 - x_1x_4 + x_2^2 + 2x_2x_4 - x_2x_5 + x_3x_6 + x_3x_7 - x_3x_8 + x_4x_6 + x_5x_6 - x_5x_7=0 \\
&-x_1x_2 + x_1x_6 - x_2^2 - x_2x_5 - x_3x_4 + x_3x_5 - x_3x_6 + x_3x_8 + x_4x_6 + x_4x_8 - x_5x_6 + x_5x_7 + \\
&\quad +x_5x_8 - x_6^2 + x_6x_8 + x_7^2=0 \\
&x_1x_2 - x_1x_3 + x_2x_3 + x_2x_4 + x_2x_5 + x_3x_7 - x_3x_8 + x_4^2 - x_4x_6 + x_4x_8 - x_5^2 - x_5x_7 + \\
&\quad - x_6x_8 - x_7^2=0 \\
&-x_1x_3 - x_1x_4 - x_1x_6 - 2x_2x_4 + x_2x_6 + x_3x_8 - x_4x_7 - x_5x_6=0 \\
&x_1x_2 - x_1x_8 - x_2x_5 - x_2x_7 + x_2x_8 - x_3x_7 - x_4x_5 - x_4x_7 + x_5^2 - x_5x_8 - x_6x_7 + x_6x_8 + \\
&\quad +x_7^2=0 \\
&2x_1x_3 + x_1x_4 + x_1x_7 - x_1x_8 + x_2^2 + x_2x_5 + x_2x_7 + x_3^2 + x_3x_6 - x_3x_7 + x_3x_8 - x_4x_5 + \\
&\quad - x_4x_6 - x_4x_7 + x_5x_6 + x_5x_7 + x_6^2 + x_6x_8 + x_7^2=0 \\
&x_1x_3 - x_1x_4 - x_1x_5 - x_1x_6 - x_2x_4 + x_2x_6 - x_2x_8 - 2x_3x_4 - x_3x_6 - x_3x_7 + x_3x_8 - x_4^2 + \\
&\quad - x_4x_5 + x_6x_7=0 \\
&x_1x_5 - x_1x_6 - x_1x_7 + x_2x_4 + x_2x_5 - x_2x_6 - x_2x_8 + x_3x_4 - x_3x_8 - x_4x_6 - x_4x_7 - x_4x_8 + \\
&\quad +x_5^2 + x_5x_6 + x_6x_7 - x_6x_8 - x_7^2=0 \\
&-x_1^2 - x_1x_2 - x_1x_4 + x_1x_6 + x_2^2 + x_2x_4 - x_2x_8 + x_3x_4 - 2x_3x_5 + x_3x_7 + x_4^2 + x_4x_6 + \\
&\quad - x_4x_8 - x_5^2 + x_6x_7 - x_6x_8 - x_7^2=0 \\
&x_1^2 - x_1x_2 - x_1x_3 + x_1x_7 + 2x_1x_8 + x_2x_3 - x_2x_7 + x_3^2 + x_3x_4 + x_3x_5 + x_4^2 + 2x_4x_5 + \\
&\quad - x_5x_6 - x_5x_8=0.
\end{align*}
\allowdisplaybreaks[0]

This curve has only two rational points: the two cusps (\cite[p. 241, Theorem 1]{Kenku169},\cite{Kenku169errata}). Using the previous equations, these rational points have the following coordinates.
\[
\begin{array}{|c|}
\toprule
\text{Rational points} \\
\midrule
(-2:-1:-4:3:6:-3:1:4) \\
(0:0:0:0:0:0:0:1) \\
\bottomrule
\end{array}
\]

The map $\pi_{\textnormal s}\colon \Xs(13)\to \Xs^+(13)$, using the previous model for $\Xs(13)$ and the model (\ref{BurcuEq}) for $\Xs^+(13)$, is
\[
\begin{sistema}
X= - x_1 + x_2 + 2x_4 + x_5 - x_6 + x_7 - x_8 \\
Y= - x_2 - x_3 + x_4 + x_5 - x_6 - x_8 \\
Z= - x_1 - x_2 - 2x_4 + x_5 + x_6.
\end{sistema}
\]
\bigskip

The curve $\Xns(13)$ of $g=8$ can be explicitly given by the following 15 equations in $\PP^{7}$.

\allowdisplaybreaks
\begin{align*}
&x_1^2 - x_1x_3 - x_1x_4 - x_1x_7 + x_1x_8 + x_2x_4 + x_2x_5 + 2x_3x_4 - 2x_3x_5 - x_3x_8 + 2x_4x_5 + \\
&\quad+x_4x_7 + x_5x_8 - x_7^2 + x_7x_8=0, \\
&-x_1x_3 + 2x_1x_5 + x_1x_8 - 2x_3x_4 - x_3x_5 + x_3x_6 - x_3x_7 - x_4x_5 - x_4x_6 + x_4x_7 + \\
&\quad +x_4x_8 - x_5^2 + x_5x_6 - 3x_5x_8 - x_6x_7 - 3x_6x_8 + x_7^2 - x_8^2=0, \\
&-x_1x_3 + 2x_1x_4 + x_1x_5 - 2x_1x_6 + 4x_1x_8 + x_2x_4 + x_2x_5 - x_3x_4 + x_3x_6 - x_3x_7 - x_4^2 + \\
&\quad +x_4x_5 - 2x_4x_8 + 2x_5x_7 + x_5x_8 - 2x_6x_8 + x_7x_8 - x_8^2=0, \\
&x_1x_3 + x_1x_4 + x_1x_5 - 3x_1x_6 + x_1x_7 + 2x_1x_8 - x_2x_3 - x_2x_4 + x_2x_5 + x_2x_6 - x_3^2 + \\
&\quad - x_3x_4 - x_3x_5 + x_3x_6 - x_3x_8 - 2x_4x_5 - x_4x_8 + x_5x_6 + x_5x_7 + 2x_6^2 - 2x_6x_7 + x_7^2 + \\
&\quad +x_7x_8 - x_8^2=0, \\
&x_1x_2 - x_1x_3 + x_1x_5 + x_1x_6 - x_1x_7 + x_1x_8 + x_2^2 + x_2x_3 - x_2x_4 - x_2x_5 - x_2x_6 + x_3^2 + \\
&\quad - x_3x_4 - x_3x_5 - x_3x_6 + x_3x_8 - x_4^2 + x_4x_5 + 2x_4x_6 + x_4x_7 - 2x_4x_8 - x_5^2 + 2x_5x_6 + \\
&\quad +x_5x_7 - 2x_5x_8 + x_6x_7 - x_6x_8 + x_7x_8 - x_8^2=0, \\
&2x_1x_2 + x_1x_3 - x_1x_4 + x_1x_6 - x_1x_7 - x_1x_8 + x_2x_3 - 2x_2x_4 - x_2x_5 - x_2x_6 + x_2x_7 + \\
&\quad +x_3^2 - 2x_3x_4 - x_3x_6 - x_3x_7 + x_4x_5 + x_4x_6 + x_4x_7 + 2x_4x_8 + x_5x_6 - 2x_5x_8 + \\
&\quad +x_6x_7 - x_6x_8=0, \\
&-x_1^2 + x_1x_2 + 2x_1x_3 + x_1x_5 - x_1x_6 - x_1x_7 + 2x_1x_8 - x_2^2 - x_2x_3 + x_2x_6 + x_2x_7 + \\
&\quad +x_2x_8 - x_3x_4 - x_3x_5 - x_3x_8 - x_4^2 + x_4x_6 + x_5^2 - x_5x_6 - x_6x_7 - x_6x_8=0, \\
&-x_1^2 - x_1x_2 + x_1x_5 + 2x_1x_6 + x_1x_7 + x_1x_8 + x_2x_3 - x_2x_4 - x_2x_5 + x_2x_7 + x_2x_8 - x_3x_5 + \\
&\quad +x_3x_6 - x_3x_7 + x_4x_5 - x_4x_6 + x_4x_7 - x_5^2 + x_5x_6 + x_5x_7 - x_5x_8 - 2x_6x_8 + x_7x_8 - x_8^2=0, \\
&-2x_1x_2 + 2x_1x_3 - x_1x_4 - x_1x_5 + x_1x_7 - x_1x_8 - x_2x_4 + 2x_2x_5 + 2x_2x_6 + x_2x_8 - x_3^2 + \\
&\quad +x_3x_4 + x_3x_5 + x_3x_6 + x_3x_7 - x_3x_8 + x_4^2 + x_4x_5 + x_4x_7 - x_5^2 - 2x_5x_6 - x_5x_7 + \\
&\quad +x_5x_8 - x_6x_7 + x_6x_8 - x_7^2 + x_7x_8=0, \\
&-2x_1x_3 - x_1x_4 + x_1x_5 - x_1x_7 + 2x_1x_8 + x_2^2 + x_2x_3 - x_2x_4 - x_2x_7 + x_3x_4 + x_3x_5 + \\
&\quad +2x_3x_6 - 2x_3x_7 + 2x_3x_8 - x_4^2 + 2x_4x_5 + 2x_4x_7 - x_4x_8 - x_5^2 + 2x_5x_7 - x_5x_8 + \\
&\quad +2x_6x_7 - 2x_6x_8 + 2x_7x_8 - 2x_8^2=0, \\
&-x_1x_2 + 2x_1x_4 - x_1x_6 + x_1x_7 + x_1x_8 - x_2^2 + 2x_2x_4 + x_2x_5 - x_2x_6 + 2x_2x_7 + \\
&\quad +2x_2x_8 - x_4x_6 - x_4x_7 - x_4x_8 + x_5x_6 + x_5x_7 + x_5x_8=0, \\
&x_1x_3 + 2x_1x_4 - x_1x_5 - x_1x_6 + x_1x_7 + x_1x_8 - x_2^2 - x_2x_3 - x_2x_4 + x_2x_5 + x_2x_6 + \\
&\quad +x_2x_7 - 2x_2x_8 - x_3^2 + 2x_3x_5 + x_3x_6 + x_3x_7 - x_3x_8 + x_4x_5 - x_4x_6 - x_4x_7 - x_4x_8 + \\
&\quad - x_5x_6 + x_5x_7 + 2x_5x_8 - x_6x_7 + x_6x_8 - x_7^2 + x_7x_8=0, \\
&-x_1^2 + x_1x_2 + 2x_1x_3 - x_1x_4 + x_1x_6 - x_1x_7 - x_2x_3 - 2x_2x_4 - 2x_2x_5 - x_2x_7 - x_2x_8 + \\
&\quad - x_3^2 - x_3x_5 + x_3x_7 - x_3x_8 + x_4^2 + x_4x_5 + 2x_4x_7 + x_4x_8 + x_5x_6 + x_5x_7 - x_5x_8 + \\
&\quad +2x_6x_8 + 2x_7^2 + 2x_7x_8 - 2x_8^2=0, \\
&x_1^2 + 2x_1x_2 - x_1x_3 - x_1x_4 + x_1x_6 - x_1x_8 - x_2^2 + 2x_2x_3 - 2x_2x_5 + x_2x_7 + 3x_3^2 - x_3x_4 + \\
&\quad - 2x_3x_6 - x_3x_7 - x_4^2 + 3x_4x_6 + 2x_5^2 + x_5x_6 + x_5x_7 - 2x_6^2 - x_6x_7 + x_6x_8 - x_7^2 + \\
&\quad - 2x_7x_8 + 2x_8^2=0, \\
&2x_1^2 - 2x_1x_2 + x_1x_4 + 3x_1x_5 - 2x_1x_6 - 2x_1x_7 - 2x_1x_8 + x_2^2 - x_2x_3 - 3x_2x_5 - x_2x_7 + \\
&\quad - 3x_3x_4 + x_3x_6 + x_3x_8 + x_4^2 + 3x_4x_5 - 2x_4x_6 + x_4x_7 + x_4x_8 + 2x_5^2 - 4x_5x_6 - 2x_5x_8 + \\
&\quad +2x_6x_7 + x_7^2 - 2x_7x_8 + x_8^2=0.
\end{align*}
\allowdisplaybreaks[0]

We know that this curve doesn't have rational points. The map $\pi_{\textnormal{ns}}\colon \Xns(13)\to \Xns^+(13)$, using the previous model for $\Xns(13)$ and the model (\ref{BurcuEq}) for $\Xns^+(13)$, is
\[
\begin{sistema}
X=-3x_1+2x_2 \\
Y=-3x_1+x_2+2x_4-2x_5 \\
Z=x_1+x_2+x_4-x_5.
\end{sistema}
\]

\bibliographystyle{amsalpha}
\bibliography{Cartan13_arXiv_1}{}

\end{document}